\documentclass[11pt,letterpaper,reqno]{amsart}
\usepackage[reqno]{amsmath}
\usepackage[utf8]{inputenc}
\usepackage[T1]{fontenc}
\usepackage{enumitem}
\usepackage{amsmath}
\usepackage{amsthm}
\usepackage{amsfonts}
\usepackage{amssymb}
\usepackage{graphicx}
\usepackage[mathscr]{euscript}
\DeclareSymbolFont{rsfs}{U}{rsfs}{m}{n}
\DeclareSymbolFontAlphabet{\mathscrsfs}{rsfs}
\newcounter{atheorem}
\renewcommand{\theatheorem}{\Alph{atheorem}}

\newenvironment{atheorem}
  {\refstepcounter{atheorem}
   \par\medskip
   \noindent\textbf{Theorem \theatheorem.\, }\itshape}
  {\par\normalfont\medskip}
\newtheorem{ntheorem}{Theorem}[section]

\newtheorem{lemma}[ntheorem]{Lemma}

\usepackage{blindtext}
\usepackage[english]{babel}
\providecommand{\subjclass}[2][]{\textbf{Mathematics Subject Classification #1:} #2}
\usepackage{hyperref} 
\usepackage{titlesec}
\titlelabel{\thetitle.\quad}
\title{Sections and Chapters}

\counterwithin{equation}{section}

\usepackage[left=3cm,right=3cm,top=3cm,bottom=3cm]{geometry}
\title{ A normality criterion for a family of meromorphic functions}
\date{}
\author{Kuntal Mandal}
\address{Department of Mathematics, Visva-Bharati, Santiniketan, West Bengal, India 731235.}
\email{mkuntal1994@gmail.com}
\author{Bipul Pal}
\address{Department of Mathematics, Visva-Bharati, Santiniketan, West Bengal, India 731235.}
\email{palbipul86@gmail.com, bipul.pal@visva-bharati.ac.in}
\thanks {The work of the first author was supported by UGC (UGC fellowship for PhD (NTA Ref. No.-211610061981)), India.}
\subjclass[2020] {30D35, 30D45}
    
\keywords {Meromorphic function, Differential polynomial, normal family, zeros.}

\begin{document}
\begin{abstract}
We consider a family  $\mathscr{F}$ of meromorphic functions defined in a domain $D$, a holomorphic function $\psi$ and a homogeneous differential polynomial $ P[f] $ of degree $d$ with weight $w$. In this paper, we prove the normality of $\mathscr{F}$ under certain conditions such as $f\neq 0$, $P[f]\neq 0$ and all the zeros of the function $P[f] - \psi^d$ have multipicity at least  $\displaystyle{\frac{w+1}{w-1}}$, for each $f \in \mathscr{F}$.
	
\end{abstract}

    \maketitle
    
	\section{\textbf{Introduction}}
	  
Let $D$ be a domain in $\mathbb{C}$ and $\mathscr{F}$ be a family of meromorphic functions defined on $D$. Then $\mathscr{F}$ is said to be normal on $D$,  in the sense of Montel, if for any sequence $\{f_n\}\subset \mathscr{F}$ there exists a subsequence $\{f_{n_j}\}$, which converges spherically locally uniformly in $D$, to a meromorphic function or to $\infty$ (see \cite{hayman1964meromorphic,schiff1993normal,Lo1993value}). The following is Gu's normality criterion obtained in 1979 (see \cite{gu1979normal}, one of the well-known findings on normal families of meromorphic functions.

\begin{atheorem}\label{a}
  Let $\mathscr{F}$ be a family of meromorphic functions defined in $D$ and let $k$ be a positive integer. If, for every function $f\in \mathscr{F}$, $f \neq 0$, $f^{(k)} \neq 1$, then $\mathscr{F}$ is normal in $D$.
\end{atheorem}
Several extensions of this result have been developed. This is still true, as proved by some authors, if $f^{(k)} \neq 1$ is weakened to "the zeros of $f^{(k)}- 1$ have multiplicities at least $l$, where $l\ge k+5+[2/k]$" (see \cite{chen1994normality, fang1994criteria, Lo1993value}, etc.). As demonstrated by Fang and Chang \cite{fang2007normal} in 2007, if $f^{(k)} \neq 0$, the lower bound of $l$ can be substantially less, as follows.
\begin{atheorem}\label{b}
    Let $\mathscr{F}$ be a family of meromorphic functions defined in D and let $k$ be a positive integer. If, for every function $f \in \mathscr{F}$, $f \neq 0, f^{(k)} \neq 0$, and all zeros of $f^{(k)}-1$ have multiplicity at least $\displaystyle{\frac{k + 2}{k}}$, then $\mathscr{F}$ is normal in $D$.
\end{atheorem}
Jiying Xia and Yan Xu \cite{xia2009normal} established the following generalisation in 2009, in response to the obvious question of how the outcome in Theorem \ref{b} might be extended when the constant $1$ is replaced by a small function $\psi(z)\not\equiv0$.
\begin{atheorem}\label{c}
Let $\mathscr{F}$ be a family of meromorphic functions defined in a domain $D \subset \mathbb{C}$, let $\psi (\not\equiv 0)$ and $a_0, a_1, \ldots, a_{k-1}$ be holomorphic functions in $D$, and let $k$ be a positive integer. Suppose that, for every function $f \in F$, $f \neq 0$, $f^{(k)} + a_{k-1} f^{(k-1)} + \cdots + a_1 f' + a_0 f \neq 0,$ and all zeros of $f^{(k)}(z) + a_{k-1}(z) f^{(k-1)}(z) + \cdots + a_1(z) f'(z) + a_0(z) f(z) - \psi(z)$ have multiplicity at least $\displaystyle{\frac{k+2}{k}}$. If, for $k = 1$, $\psi$ has only zeros of multiplicity at most $2$ and for $k \ge 2$, $\psi$ has only simple zeros, then $\mathscr{F}$ is a normal family in $D$.
\end{atheorem}
 Earlier results by Schwick \cite{schwick1997exceptional} in $1997$ and Yang \cite{yang1986normal} in $1986$ had already shown that the conclusion of the Theorem \ref{a} still holds even when $1$ is replaced by a non-zero holomorphic function  $\psi (\not\equiv 0)$.

 A natural question arises: what can be said if the linear differential polynomial
 $f^{(k)}(z) + a_{k-1}(z) f^{(k-1)}(z) + \cdots + a_1(z) f'(z) + a_0(z) f(z)$ in Theorem \ref{c} is replaced by a \textit{non-linear differential polynomial} in $f$? 

To investigate this idea, we first introduce some basic definitions that will guide the proof of our main result.

Consider the differential polynomial
	\begin{equation}\label{M1}
		P[f]=P[f,f',\cdots,f^{(k)}]=\sum_{j=1}^{n}a_jM_j[f],
	\end{equation}
	where  each monomial term is
	\begin{equation*}
		M_j[f]=M_j[f,f',\cdots,f^{(k)}]=\prod_{i=0}^{k}\left(f^{(i )}\right)^{n_{i, j}}
	\end{equation*}
	and each $a_j$ is a holomorphic function  in $D$.

    The degree and weight of $M_j[f]$ are defined by $\displaystyle d_j=d(M_j[f])=\sum_{i=0}^{k}n_{i, j}$ and $\displaystyle w_j=w(M_j[f])=\sum_{i=0}^{k}(1+i)n_{i, j}$ respectively. 
    For $P[f]$, the degree and weight are  $$\displaystyle{d=d(P)=\max_{1\le j\le n}d_j}\;\;  \text{and} \;\;\displaystyle{w=w(P)=\max_{1\le j\le n}w_j}$$ and its lower degree is $\displaystyle{\alpha =\underline{d}=\min_{1\le j\le n} d_j}$.
    
    If $\alpha=d$, then $P[f]$ is said to be a homogeneous differential polynomial.

    We now present our main result, which extends the above theorems to the setting of homogeneous differential polynomials, where the weight occurs for only one monomial and the corresponding $a_i$ is a zero-free holomorphic function.
    \begin{ntheorem}\label{m}
        Let $\mathscr{F}$ be a family of meromorphic functions defined in a domain $D \subset \mathbb{C}$, let $\psi (\not\equiv 0)$ and $a_1, a_2, \ldots, a_{n}$ be holomorphic functions in $D$, and let $k$, $n$ be two positive integers. Suppose $P[f]$ be a homogeneous differential polynomial of degree $d$ and weight $w$ such that  $P[f]$ has only one monomial $M_i[f]$ for some $i\in \{1,2,\cdots,n\}$ with $w$ as its weight, for which the corresponding coefficient $a_i$ is a zero-free holomorphic function. Assume that for every function $f \in \mathscr{F}$, $f \neq 0$, $P[f] \neq 0,$ and all zeros of $P[f] - \psi^d(z)$ have multiplicity at least $\displaystyle{\frac{w+1}{w-1}}$. If, for $w = 2$, $\psi$ has only zeros of multiplicity at most $2$ and for $w \ge 3$, $\psi$ has only simple zeros, then $\mathscr{F}$ is a normal in $D$. 
    \end{ntheorem}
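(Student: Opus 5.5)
We argue by contradiction using the Zalcman--Pang rescaling lemma, in the form adapted to differential polynomials: if $\mathscr{F}$ is not normal at $z_0\in D$ and every $f\in\mathscr{F}$ is zero-free, then for any $-1<\alpha<\infty$ there exist $z_n\to z_0$, $\rho_n\to0^+$ and $f_n\in\mathscr{F}$ such that $g_n(\zeta)=\rho_n^{-\alpha}f_n(z_n+\rho_n\zeta)$ converges spherically locally uniformly in $\mathbb{C}$ to a nonconstant meromorphic $g$ of finite order. The natural exponent is $\alpha=\frac{w-d}{d}$, i.e.\ $\rho_n^{-\alpha d}=\rho_n^{d-w}$. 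For this choice $M_j[g_n](\zeta)=\rho_n^{w_j-w}\rho_n^{-(w-d)}\cdot\rho_n^{w-d}M_j[f_n](z_n+\rho_n\zeta)$. Hence $P[f_n](z_n+\rho_n\zeta)=a_i(z_n+\rho_n\zeta)M_i[g_n](\zeta)+\sum_{j\ne i}\rho_n^{w-w_j}a_j M_j[g_n]$, which converges to $a_i(z_0)M_i[g]$ away from poles of $g$, since $w_j<w$ for $j\neq i$. Note $-1<\alpha$ holds because $w\ge d$. The case $w=d$ would mean $P[f]=a f^d$, where $P[f]\ne0$ is automatic; there the exponent $(w+1)/(w-1)$ must be read with $w\ge2$. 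We will assume $w>d$, or handle $w=d$ via Montel-type arguments for $f^d=\psi^d$.

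\textbf{Case 1: $\psi(z_0)\neq0$.} By Hurwitz, $g\ne0$ (the $g_n$ are zero-free and $g\not\equiv0$) and $M_i[g]\ne0$ (limits of the zero-free $P[f_n]$). Also, every zero of $a_i(z_0)M_i[g]-\psi^d(z_0)$ has multiplicity at least $(w+1)/(w-1)$, again by Hurwitz. Put $c=\psi(z_0)^d/a_i(z_0)\neq0$.

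The key step is a Nevanlinna-theoretic lemma: if $g$ is a transcendental or nonconstant rational meromorphic function with $g\ne0$, $M[g]\ne0$, and all $c$-points of $M[g]$ of multiplicity $\ge(w+1)/(w-1)$, then we reach a contradiction. For transcendental $g$, use the Milloux/Hayman-type inequality adapted to monomials: since $g\ne0$ and $M[g]\neq0$, the second main theorem applied to $M[g]$ (with $0,\infty,c$) gives
\[
T(r,M[g])\le \overline N(r,M[g])+\overline N\bigl(r,\tfrac1{M[g]-c}\bigr)+S.
\]
Each pole of $g$ of order $p$ gives a pole of $M[g]$ of order $pd+(w-d)\ge w$, so $\overline N(r,M[g])\le \frac1w N(r,M[g])$. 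Also $\overline N(r,1/(M-c))\le\frac{w-1}{w+1}N$. This yields $\bigl(1-\frac1w-\frac{w-1}{w+1}\bigr)T\le S$, and the coefficient $\frac{w+1-w+1}{w(w+1)}\cdot$ (after simplification) is positive, giving a contradiction. The rational case requires a direct count: write $g=1/Q$ with $Q$ a polynomial, since $g\ne0$, and compare the degrees of the numerator and denominator of $M[g]-c$. This mimics Fang--Chang's rational lemma, and we expect it to be the main obstacle, since a nonlinear monomial complicates the explicit form of the derivatives. Our first attempt will be to treat $M[g]=R'$-like growth by counting zeros of $(M[g])'$ and using $M[g]\neq0$, so that $M[g]=C/\prod(\zeta-\zeta_k)^{m_k}$ with $m_k\ge w$.

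\textbf{Case 2: $\psi(z_0)=0$.} Say $\psi(z)=z^m\phi$ near $z_0=0$ with $m\le2$ (if $w=2$) or $m=1$, and $\phi(0)=1$. Consider $F=f/z^{m d/d\cdots}$, or more precisely, following Xia--Xu, define $h_f=f/z^{(m+\alpha')}$ with the exponent chosen so that $P[f]/\psi^d$ becomes $M_i$-like in $h$. The family is normal on a punctured disc by Case 1, and we then rule out non-normality at $0$ by rescaling $h$ with $\alpha=(w-d)/d$. Two subcases arise: $z_n/\rho_n\to\infty$, which reduces to Case 1, and $z_n/\rho_n$ bounded, which produces a limit satisfying $M[g]-\zeta^{md}\cdot$const with the same multiplicity conditions. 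The restriction on the multiplicity of the zeros of $\psi$ is exactly what makes this limit contradict a variant of the key lemma with a polynomial in place of $c$. Finally, we exclude $f_n\to\infty$ near $0$ using $f\neq0$ and the argument principle.
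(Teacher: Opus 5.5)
Your architecture is the paper's: Pang--Zalcman rescaling with exponent $\frac{w-d}{d}$, the bound $\overline N(r,M_i[g])\le\frac1w N(r,M_i[g])$ in the second main theorem, then the auxiliary family $\{f/\psi\}$ with the subcases $z_n/\rho_n\to\infty$ and $z_n/\rho_n\to\alpha$. However, Case~2 omits the one step where the hypothesis on the zeros of $\psi$ actually enters. In the bounded subcase the hypotheses pass to $H(\zeta)=\lim_n f_n(\rho_n\zeta)/\rho_n^{\frac{w-d}{d}+m}=\zeta^{m}G(\zeta-\alpha)$, and nothing prevents $H$ from being a nonzero constant; this happens exactly when $G(\zeta)=c(\zeta+\alpha)^{-m}$. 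Then $M_i[H]\equiv0$ (as $w>d$), so $(P[f_n]-\psi^d)(\rho_n\zeta)/\rho_n^{md}\to-\zeta^{md}$, and neither the second main theorem nor a rational lemma is available. Your claim that the restriction on $\psi$ is what makes the limit contradict ``a variant of the key lemma with a polynomial in place of $c$'' puts it in the wrong place. That variant (no nonconstant zero-free $H$ with $M_i[H]\ne0$ such that all zeros of $aM_i[H]-\zeta^{q}$ have multiplicity $\ge\frac{w+1}{w-1}$) holds for every $q\ge0$: use Lemma~\ref{3l2} or the zero count below in the rational case, and the second main theorem with the small function $\zeta^q$ otherwise. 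The restriction is needed only to kill the degenerate limit $-\zeta^{md}$ via the multiplicity of its single zero. Be aware that the paper does this by asserting $ld\le2$ (resp.\ $ld=1$), which follows from the hypothesis only when $d=1$; for $w=d=2$ the case cannot arise since $M_i[H]=H^2$. For $w\ge3$ and $2\le d<w$, however, the zero of $-\zeta^{d}$ has multiplicity $d\ge\frac{w+1}{w-1}$, so an extra argument is required there. Also, take the auxiliary function to be $f/\psi$ and put $\frac{w-d}{d}$ into the rescaling, as the paper does; $f/z^{m+(w-d)/d}$ is not single-valued in general.

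Your final step targets the wrong alternative. If $f_n\to\infty$ on $0<|z|<1$, then $1/f_n$, holomorphic because $f\ne0$, tends to $0$ on a circle $|z|=r$ and hence, by the maximum principle, uniformly on $|z|\le r$; nothing needs excluding. The real danger is $f_n\to0$ on the punctured disc with poles accumulating at $0$, and $f\ne0$ together with the argument principle cannot rule this out by itself. For example, $f_n(z)=1/(nz-1)$ is zero-free, tends to $0$ off the origin, and is not normal at $0$. The paper excludes this case using the normality of $\mathscr{G}=\{f/\psi\}$ together with $g(0)=\infty$, which yields $|f_n|\ge|\psi|$ on a fixed disc $|z|<\delta$. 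Lastly, the rational case of Case~1 is not a real obstacle, and your own sketch finishes it. Write $M[g]=C/R$ with $\deg R=D$; then $R$ has $t\le D/w$ distinct roots and $C-cR$ has $s\le\frac{w-1}{w+1}D$ distinct roots. The polynomial $R'$ of degree $D-1$ vanishes to total order at least $(D-t)+(D-s)$ on these two disjoint sets, so $s+t\ge D+1$, contradicting $s+t\le\frac{w^2+1}{w^2+w}D<D$. The same count applied to $(\zeta^qR)'$ disposes of $C-\zeta^qR$. This gives an alternative to Lemma~\ref{3l2}, which instead uses $g^{(k)}\ne0$ to force $g=C/(\zeta-\zeta_1)^{r}$. (Minor: the coefficient in your Nevanlinna estimate is $1-\frac1w-\frac{w-1}{w+1}=\frac{w-1}{w(w+1)}$.)
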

    Clearly, Theorem \ref{m} generalizes Theorem \ref{a}, Theorem \ref{b} and Theorem \ref{c} and also bridges the gap between linear differential polynomials and broader classes of differential polynomials, shedding new light on the structure and behavior of normal families of meromorphic functions.
    \section{\textbf{Lemmas}}
    Among the many refinements of Zalcman’s lemma, a fundamental result in the theory of normal families, the following local version due to Pang and Zalcman \cite{pang2000normal} (cf.\cite{ChenGu1993an, huaihui1996yosida, yuefei1998picard,  zalcman1975heuristic, zalcman1998normal}) is of particular importance.
    \begin{lemma}\label{3l1}
        Let $k$ be a positive integer and let $\mathscr{F}$ be a family of functions meromorphic in a domain $D$ such that each function $f \in \mathscr{F}$ has only zeros of multiplicity at least $k$. Suppose that there exists a constant $A \ge 1$ such that $\displaystyle{|f^{(k)}(z)| \le A \text{ whenever } f(z) = 0, \; f \in \mathscr{F}.}$ If $\mathscr{F}$ is not normal at $z_0 \in D$, then for each $\alpha$ with $0 \le \alpha \le k$, there exist a sequence of points $z_j \in D$ with $z_j \to z_0$, a sequence of positive numbers $\rho_j \to 0$ and a sequence of functions $f_j \in \mathscr{F}$ such that $\displaystyle{g_j(\zeta) = \frac{f_j(z_j + \rho_j \zeta)}{\rho_j^{\alpha}} \longrightarrow g(\zeta)}$ locally uniformly with respect to the spherical metric, where $g$ is a non-constant meromorphic function on $\mathbb{C}$, all of whose zeros have multiplicity at least $k$ and such that $g^{\#}(\zeta) \le g^{\#}(0) = kA + 1.$ Moreover, $g$ has order at most~$2$. 

       Here, as usual, the spherical derivative is defined by $\displaystyle{g^{\#}(\zeta) = \frac{|g'(\zeta)|}{1 + |g(\zeta)|^2}}.$
    \end{lemma}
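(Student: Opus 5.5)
The plan is to follow the rescaling argument of Zalcman, as refined by Pang and Zalcman, treating first $0\le\alpha<k$ and then the endpoint $\alpha=k$, where the hypothesis $|f^{(k)}|\le A$ on the zeros of $f$ is needed. We may assume $z_0=0$ and work on the unit disc $\Delta$, since the statement is local. Since $\mathscr{F}$ is not normal at $0$, Marty's theorem gives $f_j\in\mathscr{F}$ and points $z_j^*\to 0$ with $f_j^{\#}(z_j^*)\to\infty$.

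For $0\le\alpha<k$ I would use the standard Chen--Gu/Pang construction. Define the auxiliary quantity
\[
M_j=\sup_{|z|<r}\,(r-|z|)\,\Big(\text{a suitable spherical-type size of } f_j \text{ at } z \text{ adapted to the scaling } \rho^{-\alpha}\Big),
\]
for instance through $\sup_{|z|<r}(r-|z|)\,\rho_j(z)^{-1}$, where $\rho_j(z)$ is the largest $\rho$ with $\rho^{-\alpha}f_j(z+\rho\,\cdot)$ having spherical derivative at most $kA+1$ at $0$. One then picks maximizing points $z_j$ and sets $\rho_j=\rho_j(z_j)$. The maximality gives, on the discs $|\zeta|<R$ with $R\to\infty$, a uniform bound $g_j^{\#}(\zeta)\le (kA+1)(1+o(1))$ together with the normalization $g_j^{\#}(0)=kA+1$. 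Marty's theorem then yields a subsequence $g_j\to g$ locally uniformly in the spherical metric. From this:
\begin{itemize}
\item $g^{\#}\le g^{\#}(0)=kA+1$, so $g$ is nonconstant;
\item by Hurwitz's theorem, the zeros of $g$ have multiplicity at least $k$, since those of $g_j$ do;
\item since $g^{\#}$ is bounded, the Clunie--Hayman estimate $T(r,g)=O(r^2)$ shows that $g$ has order at most $2$.
\end{itemize}
The fact that $\rho_j\to 0$ comes from $f_j^{\#}(z_j^*)\to\infty$, because $\alpha<k$ keeps the scaling compatible with $\rho_j^{\alpha}\to 0$.

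The main obstacle is the endpoint $\alpha=k$. There one must rule out that the rescaled limit degenerates, for example to a polynomial of degree below $k$, or that $\rho_j$ fails to tend to $0$. This is exactly where the bound $|f^{(k)}|\le A$ at zeros enters. Near a zero $\zeta_0$ of $g$ we have $g_j^{(k)}(\zeta)=f_j^{(k)}(z_j+\rho_j\zeta)$, so $|g^{(k)}(\zeta_0)|\le A$ at every zero of $g$.

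I would argue by contradiction as in Pang--Zalcman. Suppose that for each $\alpha<k$ close to $k$ the resulting limits $g_\alpha$ satisfy the normalization. Then the family $\{f^{(k)}\}$ is locally controlled near $0$: zeros have multiplicity at least $k$, and $|f^{(k)}|\le A$ there. Combining this with a Taylor expansion of $f$ around its zeros bounds $f$ on small discs in terms of $f^{(k)}$. This lets one pass to the limit $\alpha\uparrow k$ and obtain the normalization constant $kA+1$ rather than $1$. Concretely, the constant $kA+1$ is chosen so that a limit $g$ with $g^{\#}(0)=kA+1$ cannot be a polynomial of degree exactly $k$ with $|g^{(k)}|\le A$ at its zeros. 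Indeed, such a polynomial has the form $g=c(\zeta-a)^k$ with $|c|\,k!\le A$, and a direct computation bounds its spherical derivative by something less than $kA+1$. This contradiction excludes the degenerate case, and the remaining conclusions follow exactly as in the case $\alpha<k$.
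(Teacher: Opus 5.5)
The paper gives no proof of this lemma; it quotes it from Pang--Zalcman. Your sketch therefore has to stand on its own, and at the endpoint $\alpha=k$ it has a genuine gap.

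\textbf{The argument at $\alpha=k$ does not work.} Your limiting procedure $\alpha\uparrow k$ has no mechanism behind it. The scales $\rho_j$ produced for different $\alpha$ are unrelated, and since $\rho^{-k}f(z+\rho\zeta)=\rho^{\alpha-k}\cdot\rho^{-\alpha}f(z+\rho\zeta)$, the factor $\rho_j^{\alpha-k}$ has no controlled limit along any diagonal sequence. Nor does the hypothesis give ``local control of $\{f^{(k)}\}$''; it bounds $f^{(k)}$ only at the zeros of $f$.

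Your computation that $c(\zeta-a)^k$ with $|c|\,k!\le A$ has spherical derivative at most $k|c|^{1/k}\le kA$ is the right one. However, you apply it to the limit $g$, where $g^{\#}(0)=kA+1$ already excludes such a $g$ for free.

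What actually breaks at $\alpha=k$ is the construction itself. Let $\phi_z(\rho)=\rho^{1+\alpha}|f_j'(z)|/(\rho^{2\alpha}+|f_j(z)|^2)$ be the spherical derivative at $0$ of $\rho^{-\alpha}f_j(z+\rho\,\cdot)$. For each \emph{fixed} $j$ you need some $\rho_0(j)>0$ with $\phi_z(\rho)<kA+1$ for all $|z|\le r$ and $0<\rho<\rho_0(j)$. Otherwise the critical scales are not bounded below and $M_j=\infty$. Without the hypothesis this fails, and so does the lemma: take $k=1$ and $f_n(z)=nz$, where $\phi_0(\rho)=n$ for every $\rho$.

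\textbf{The missing step} is precisely this per-function estimate, and it is where $|f^{(k)}|\le A$ and the constant $kA+1$ enter. Away from zeros and poles of $f_j$, $\phi_z(\rho)\to0$ uniformly. Near a pole, $\phi_z(\rho)\le\rho^{1+k}|(1/f_j)'(z)|$. Near a zero of multiplicity $m>k$, one gets $\phi_z(\rho)=O(\rho^{(m-k)/m})$. Near a zero $z'$ of multiplicity exactly $k$, write $f_j(z)=(z-z')^kh(z)$ and $t=|z-z'|/\rho$; then
\[
\phi_z(\rho)=\frac{t^{k-1}\,|kh(z)+(z-z')h'(z)|}{1+t^{2k}|h(z)|^2}\le k|h(z)|^{1/k}+O(|z-z'|)\quad\text{for every }\rho>0,
\]
with $k|h(z')|^{1/k}=k\bigl(|f_j^{(k)}(z')|/k!\bigr)^{1/k}\le kA$. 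Given this, the same maximization runs verbatim at $\alpha=k$ and produces $g^{\#}\le g^{\#}(0)=kA+1$ directly, with no passage $\alpha\uparrow k$.

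\textbf{Two smaller repairs to your $\alpha<k$ sketch.}
\begin{enumerate}
\item $\rho_j(z)$ must be the \emph{smallest} $\rho$ with $\phi_z(\rho)=kA+1$, not the largest. For $\alpha>1$, $\phi_z$ is not monotone and tends to $0$ as $\rho\to\infty$, so ``largest'' is typically $+\infty$. With the first crossing, the bound $g_j^{\#}\le(kA+1)(1+o(1))$ follows from $\phi_z(\lambda\rho)\le\lambda^{1+\alpha}\phi_z(\rho)$ for $\lambda\ge1$.
\item $\rho_j\to0$ holds for every $\alpha\ge0$, via $\phi_{z_j^*}(\rho)\ge\rho^{1+\alpha}f_j^{\#}(z_j^*)$ for $\rho\le1$. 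What the multiplicity condition buys for $\alpha<k$ is the same lower bound on the critical scales, since there $\phi_z(\rho)=O(\rho^{(m-\alpha)/m})$ near zeros.
\end{enumerate}
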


    \begin{lemma}\label{3l2}
        Let $k (\ge 1)$ and $l(\ge 0)$ be two integers. Suppose $f$ be a rational function and $\displaystyle{M[f]=\prod_{i=0}^{k}(f^{(i)})^{n_i}}$ be a monomial of degree $d$ and weight $w$. If $f \neq 0$ and $f^{(k)} \neq 0$, then the function $aM[f](z)-z^l$
        has at least one simple zero in the complex plane $\mathbb{C}$, where $a$ is a non-zero constant.
    \end{lemma}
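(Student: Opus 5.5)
The plan is to use the two hypotheses $f\neq 0$ and $f^{(k)}\neq 0$ to pin down $f$ completely, and then count zeros of an explicit polynomial.

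\emph{Step 1: the shape of $f$.} Since $f$ is rational and zero-free, $f=C/Q$ with $C\neq 0$ a constant and $Q$ a polynomial of degree $p\ge 0$. If $p=0$, then $f$ is constant and $f^{(k)}\equiv 0$ because $k\ge 1$. That contradicts $f^{(k)}\neq 0$, so $p\ge 1$. Write $f^{(k)}=C\,P_k/Q^{k+1}$, where $P_k$ and $Q$ have no common zeros. Near $\infty$ we have $f^{(k)}(z)\sim c\,z^{-p-k}$, so $\deg P_k=p(k+1)-p-k=pk-k$. The condition $f^{(k)}\neq 0$ forces $P_k$ to be a nonzero constant, hence $pk=k$ and $p=1$. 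Therefore
\[
f(z)=\frac{C}{z-b},\qquad f^{(i)}(z)=\frac{(-1)^i\, i!\, C}{(z-b)^{i+1}} .
\]
Substituting into the monomial gives $M[f]=K/(z-b)^{w}$ with $K\neq 0$, where $w=\sum_i (i+1)n_i$ is exactly the weight.

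\emph{Step 2: reduction to a polynomial.} We have
\[
aM[f](z)-z^l=-\frac{h(z)}{(z-b)^w},\qquad h(z)=z^l(z-b)^w-aK .
\]
Since $aK\neq 0$, the point $b$ is not a zero of $h$. Hence the zeros of $aM[f]-z^l$, with their multiplicities, are exactly those of $h$. The polynomial $h$ has degree $l+w$.

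\emph{Step 3: multiple zeros of $h$.} Compute
\[
h'(z)=z^{l-1}(z-b)^{w-1}\bigl((l+w)z-lb\bigr).
\]
A multiple zero $z_0$ of $h$ must satisfy $h'(z_0)=0$.
\begin{itemize}
\item At $z_0=b$, and at $z_0=0$ when $l\ge 1$, we get $h(z_0)=-aK\neq 0$. So these points are not zeros of $h$.
\item The only remaining candidate is $z_0=lb/(l+w)$. It is a simple zero of $h'$ whenever $z_0\neq 0,b$, so it can be at most a double zero of $h$.
\item In the degenerate case $b=0$, we have $h=z^{l+w}-aK$, and all its zeros are simple.
\end{itemize}
Hence $h$ has at most one multiple zero, and that zero has multiplicity exactly $2$. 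If $l+w\ge 3$, the remaining $l+w-2\ge 1$ zeros, counted with multiplicity, must be simple, and the lemma follows.

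\emph{Main obstacle: small $l+w$.} This is where the argument needs care.
\begin{itemize}
\item If $l+w=1$, then $h$ is linear, so its zero is simple.
\item If $(l,w)=(0,2)$, then $h=(z-b)^2-aK$ has two distinct simple zeros.
\item If $l=w=1$, then $M[f]=f$, and $h=z^2-bz-aK$ has a double zero exactly when $b^2+4aK=0$. This can actually occur, for instance with $f=C/(z-b)$ and $b^2=-4aC$.
\end{itemize}
So the statement as written needs the implicit restriction $w\ge 2$, or $l+w\neq 2$ with $w=1$ excluded. This restriction is consistent with the main theorem, where $(w+1)/(w-1)$ only makes sense for $w\ge 2$. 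I would state that restriction explicitly and then conclude as above.
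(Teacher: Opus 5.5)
Step 1 contains a false deduction. You assert $f^{(k)}=C\,P_k/Q^{k+1}$ with $P_k$ and $Q$ coprime. That fails whenever $Q$ has a repeated root, so $p=1$ does not follow. Take $f=1/z^2$ and $k=1$. Then $f\neq 0$ and $f'=-2/z^3\neq 0$, yet $\deg Q=2$; in your normalization $P_1=-2z$, which vanishes at the zero of $Q$.

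The correct bookkeeping, as in the paper, is to write $f=C/\prod_{j=1}^t(z-z_j)^{r_j}$ with distinct $z_j$. In lowest terms, $f^{(k)}$ then has denominator $\prod_j(z-z_j)^{r_j+k}$ and a numerator of exact degree $k(t-1)$. So $f^{(k)}\neq 0$ forces only $t=1$, i.e.\ $f=C/(z-b)^r$ with $r\ge 1$ arbitrary. Hence $M[f]=K/(z-b)^{N}$ with $N=(r-1)d+w$. As written, your proof says nothing about functions with a multiple pole.

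The repair is mechanical. Replace $w$ by $N$ throughout, so $h(z)=z^l(z-b)^N-aK$ and $h'(z)=z^{l-1}(z-b)^{N-1}\bigl((l+N)z-lb\bigr)$; your Steps 2--3 then go through unchanged. Since $r\ge 2$ gives $N\ge d+w\ge 2$, the only exceptional configuration $l=N=1$ is still exactly $r=1$, $w=1$, and your counterexample there is correct.

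That case is also a hole in the paper's own proof. The paper shows any multiple zero must be $z_0=lz_1/(N+l)$ and concludes that, in your notation, $h=(z-z_0)^{N+l}$. For $l=1$ it compares coefficients of $z$ to get $(N+1)^{N-1}=1$, which is no contradiction when $N=1$. So the restriction $w\ge 2$, implicit in the main theorem, really is needed.

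With Step 1 fixed, your finish via the factorization of $h'$ is cleaner than the paper's coefficient comparison: the unique candidate is at most a double zero, and then you count degrees. It also handles the small cases transparently.
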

    \begin{proof}
        Since $f\neq 0$ and $f^{(k)}\neq 0$, it follows that $f$ must be a rational function which is not a polynomial. Then $f(z)$ can be written as
        $$f(z)=\frac{C}{(z-z_1)^{r_1}(z-z_2)^{r_2}\cdots (z-z_t)^{r_t}},$$
        where $C(\neq 0)$ is a constant and $r_1,\; r_2,\cdots ,\; r_t$ are all positive integers.\\ Set $\displaystyle{r=\sum_{j=1}^tr_j}$.
        Then we have
        \begin{equation*}
            \begin{split}
                f'(z)= & -C\bigg\{\frac{r_1}{(z-z_1)^{r_1+1}(z-z_2)^{r_2}\cdots (z-z_t)^{r_t}}+\frac{r_2+1}{(z-z_1)^{r_1}(z-z_2)^{r_2+1}\cdots (z-z_t)^{r_t}}+\cdots\\
                +& \frac{r_t+1}{(z-z_1)^{r_1}(z-z_2)^{r_2}\cdots (z-z_t)^{r_t+1}}\bigg\}\\
                =& -\frac{C\displaystyle \sum_{j=1}^{t}r_j(z-z_1)(z-z_2)\cdots(z-z_{j-1})(z-z_{j+1})\cdots(z-z_t)}{(z-z_1)^{r_1+1}(z-z_2)^{r_2+1}\cdots (z-z_t)^{r_t+1}}\\
                =& -\frac{C\left(rz^{t-1}+b_{t-2}z^{t-2}+b_{t-3}z^{t-3}+\cdots+b_1z+b_0 \right)}{\displaystyle \prod_{j=1}^{t}(z-z_j)^{r_j+1}},
            \end{split}
        \end{equation*}
        where $b_0,b_1,\cdots,b_{t-2}$ are constants. Now, by mathematical induction, we get
        \begin{equation*}
            f^{(k)}(z)=\frac{Bz^{k(t-1)}+C_{k(t-1)-1}z^{k(t-1)-1}+\cdots+C_0}{\prod_{j=1}^{t}(z-z_j)^{r_j+k}},
        \end{equation*}
        where $B=(-1)^kr(r+1)\cdots (r+k-1)C\neq 0$ and $C_{k(t-1)-1},\cdots,C_0$ are constants. As $F^{(k)}\neq 0$ and $B\neq 0$, then we have
        $C_{k(t-1)-1}=C_{k(t-1)-2}=\cdots=C_0=0$ and $t-1=0$, which implies $t=1$. Thus, we have
        \begin{equation*}
            f(z)=\frac{C}{(z-z_1)^{r_1}}. 
        \end{equation*}
        Also, for $k\ge 1$, we get 
        \begin{equation*}
            f^{(k)}(z)=\frac{A_k}{(z-z_1)^{r_1+k}};
        \end{equation*}
        and if $k=0$, then $A_0=C$. Therefore, we see that
        \begin{equation*}
            \begin{split}
                M[f](z)= & \left(\frac{A_0}{(z-z_1)^{r_1}}\right)^{n_0}\left(\frac{A_1}{(z-z_1)^{r_1+1}}\right)^{n_1}\cdots \left(\frac{A_k}{(z-z_1)^{r_1+k}}\right)^{n_k}\\
                =& \frac{B_1}{(z-z_1)^{(r_1-1)d+w}}.
            \end{split}
        \end{equation*}
        Now, we have
        \begin{equation}\label{3p1}
            aM[f](z)-z^l=\frac{aB_1-z^l(z-z_1)^{(r_1-1)d+w}}{(z-z_1)^{(r_1-1)d+w}}.
        \end{equation}
        Since the numerator is a polynomial of degree greater than $1$, there exists a point $z_0$ such that $aM[f](z_0)-z_0^l=0$.

        If possible, suppose that all the zeros of $aM[f](z)-z^l$ are multiple zeros.
        We now consider the following two cases.

        \noindent\textbf{Case I:} Suppose $l=0$. 
        
        Then $$M'[f](z)=0\quad \text{ which implies}\quad -\frac{aB_1\left\{(r_1-1)d+w\right\}}{(z-z_0)^{(r_1-1)d+w+1}} =0.$$
        Hence $(r_1-1)d+w=0$, which leads to a contradiction.

   \noindent \textbf{Case II:} Suppose $l\ge 1$.
   
   Then $z_0$ is a root of $aM[f](z)-z^l$. Then we get 
    \begin{equation*}
            aM[f](z_0)-z_0^l=\frac{aB_1-z_0^l(z_0-z_1)^{(r_1-1)d+w}}{(z_0-z_1)^{(r_1-1)d+w}}.
    \end{equation*}
    Then this implies
    \begin{equation}\label{3p2}
        aM'[f](z_0)-lz_0^{l-1}=\frac{-aB_1\{(r_1-1)d+w\}-lz_0^{l-1}(z_0-z_1)^{(r_1-1)d+w+1}}{(z_0-z_1)^{(r_1-1)d+w+1}}=0.
    \end{equation}
    Now, from \eqref{3p1} and \eqref{3p2} we have
    \begin{equation}\label{3p3}
        aB_1=z_0^l(z_0-z_1)^{(r_1-1)d+w}
    \end{equation}
    and
    \begin{equation}\label{3p4}
        aB_1\{(r_1-1)d+w\}=-lz_0^{l-1}(z_0-z_1)^{(r_1-1)d+w+1}.
    \end{equation}
    Together \eqref{3p3} and \eqref{3p4} implies  $\displaystyle{z_0=\frac{lz_1}{(r_1-1)d+w+l}}$. Thus $aM[f](z)-z^l$ has only one zero $z_0$ and $z_1\neq 0$. Hence 
    \begin{equation*}
        aM[f](z)-z^l=\frac{-(z-z_0)^{(r_1-1)d+w+l}}{(z-z_1)^{(r_1-1)d+w}},
    \end{equation*}
    which implies that
    \begin{equation*}
        aB_1-z^l(z-z_1)^{(r_1-1)d+w}=-{(z-z_0)^{(r_1-1)d+w+l}}.
    \end{equation*}
    If $l\ge2$, the coefficient  of $z$ in the left hand side is $0$ and the coefficient of $z$ in right hand side is $$\displaystyle{(-1)^{(r_1-1)d+w+l}\left\{{(r_1-1)d+w+l}\right\}z_0^{{(r_1-1)d+w+l}-1}}\neq0,\quad \text{a contradiction.} $$ \\
    If $l=1$, equating the coefficient of $z$, we have
    $$(-1)^{(r_1-1)d+w+1}\{{(r_1-1)d+w+1}\}z_0^{{(r_1-1)d+w}}=(-1)^{(r_1-1)d+w+1}z_1^{(r_1-1)d+w},$$
    which implies
    $${(r_1-1)d+w+1}\left\{\frac{z_1}{({(r_1-1)d+w+1}}\right\}^{(r_1-1)d+w}=z_1^{(r_1-1)d+w}.$$
    Then we have $\displaystyle{\left\{{(r_1-1)d+w+1}\right\}^{(r_1-1)d+w-1}=1}$, which leads to a contradiction.

    So, \textbf{Case I} and \textbf{Case II} together prove that $aM[f](z)-z^l$ has at least one simple zero.
    \end{proof}
    \begin{lemma}\label{3l3}
        Let $\mathscr{F}$ be a family of meromorphic functions defined in a domain $D$ and let $k$, $n$ be two positive integers. Let $b(z) \not\equiv 0$ and $a_1(z), a_1(z), \ldots, a_{n}(z)$ be analytic functions in $D$.  Suppose $P[f]$ be a homogeneous differential polynomial of degree $d$ and weight $w$ such that  $P[f]$ has only one monomial $M_i[f]$ for some $i\in \{1,2,\cdots,n\}$ with $w$ as its weight, for which the corresponding coefficient $a_i$ is a zero-free holomorphic function.  If, for every function $f \in \mathscr{F}$, $f \neq 0, \quad P[f]\neq 0$  and all zeros of $P[f](z) - b(z)$ have multiplicity at least  $\displaystyle{\frac{w+1}{w-1}}$, then $ \mathscr{F}$ is normal in $D$.
    \end{lemma}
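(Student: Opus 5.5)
We split $D$ into points where $b$ does not vanish and zeros of $b$, which are isolated since $b\not\equiv0$. Normality is local, so we fix $z_0\in D$ and prove normality in a neighbourhood of $z_0$. The tool throughout is Lemma \ref{3l1}. Since every $f\in\mathscr{F}$ is zero-free, the hypothesis on zeros is vacuous, so we may take the integer $k$ of Lemma \ref{3l1} as large as we like and hence use any exponent $\alpha\ge0$. We write $M:=M_i$ for the unique monomial of weight $w$, whose coefficient $a_i$ is zero-free.

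\emph{Step 1: $b(z_0)\neq0$.} Suppose $\mathscr{F}$ is not normal at $z_0$. Apply Lemma \ref{3l1} with $\alpha=w/d$, giving $g_j(\zeta)=\rho_j^{-w/d}f_j(z_j+\rho_j\zeta)\to g$. By homogeneity,
\[
a_lM_l[f_j](z_j+\rho_j\zeta)=\rho_j^{\,w-w_l}\,a_l(z_j+\rho_j\zeta)M_l[g_j](\zeta).
\]
Every monomial with $w_l<w$ therefore tends to $0$, and $P[f_j](z_j+\rho_j\zeta)-b(z_j+\rho_j\zeta)\to a_i(z_0)M[g](\zeta)-b(z_0)$ locally uniformly away from the poles of $g$. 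Hurwitz's theorem gives three facts:
\begin{itemize}
\item $g\neq0$;
\item $M[g]\neq0$, because $M[g]\not\equiv0$, which is the standard argument using that $g$ is non-constant and $g\ne0$;
\item all zeros of $a_i(z_0)M[g]-b(z_0)$ have multiplicity at least $(w+1)/(w-1)>1$, so they are multiple.
\end{itemize}
Since $g\neq0$ and $M[g]\neq0$, every factor $g^{(s)}$ occurring in $M$ is zero-free, in particular the highest derivative occurring.

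We then show $g$ is rational. Here a Nevanlinna-theoretic estimate of Hayman/Milloux type is used, exploiting that $g$ and $M[g]$ omit $0$, that $g$ has order at most $2$, and that $M[g]-c$ has only multiple zeros. The sharp threshold $(w+1)/(w-1)$ is exactly what makes the counting-function inequality close. Once $g$ is rational, Lemma \ref{3l2} with $l=0$ and $a=a_i(z_0)/b(z_0)$ yields a simple zero, which is a contradiction.

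\emph{Step 2: $b(z_0)=0$.} Normalize $z_0=0$ and write $b(z)=z^m b_1(z)$ with $b_1(0)\neq0$. By Step 1, $\mathscr{F}$ is normal in $0<|z|<\delta$. Suppose it is not normal at $0$. Apply Lemma \ref{3l1} with $\alpha=(w+m)/d$, so that
\[
\rho_j^{-m}\bigl(P[f_j]-b\bigr)(z_j+\rho_j\zeta)=\sum_l \rho_j^{\,w-w_l}a_lM_l[g_j](\zeta)-\Bigl(\tfrac{z_j}{\rho_j}+\zeta\Bigr)^m b_1(z_j+\rho_j\zeta).
\]
We then distinguish two cases according to the behaviour of $z_j/\rho_j$.

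\emph{Case (a): $z_j/\rho_j\to c\in\mathbb{C}$ along a subsequence.} The right side tends to $a_i(0)M[g](\zeta)-b_1(0)(\zeta+c)^m$. As before, $g\neq0$, $M[g]\neq0$, and the zeros of this limit are multiple. We then translate $\zeta\mapsto\zeta-c$ and show $g$ is rational by the same Nevanlinna argument as in Step 1. Lemma \ref{3l2} with $l=m$ then gives a contradiction.

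\emph{Case (b): $z_j/\rho_j\to\infty$.} Rescale by $z_j$ instead, setting
\[
G_j(\zeta)=z_j^{-(w+m)/d}f_j(z_j+z_j\zeta).
\]
The family $\{G_j\}$ satisfies the hypotheses of the lemma with $b$ replaced by $(1+\zeta)^m b_1(z_j+z_j\zeta)$, which is zero-free on $\mathbb{C}\setminus\{-1\}$, and lower-weight terms again vanish in the limit. By Step 1, $\{G_j\}$ is normal on $\mathbb{C}\setminus\{-1\}$, hence near $\zeta=0$. On the other hand, the Zalcman rescaling $g_j$ corresponds to shrinking scales $\rho_j/|z_j|\to0$ at $\zeta=0$, which forces non-normality of $\{G_j\}$ at $0$, a contradiction. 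Finally, normality in the punctured disc together with this local control at $0$ gives normality at $0$: either no subsequence tends to $\infty$, or one uses the maximum principle applied to $1/f_j$, since $f_j\neq0$.

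The main obstacle is the rationality step common to both cases. It requires a careful Nevanlinna estimate bounding $T(r,g)$ by the counting functions of poles and of the multiple zeros of $M[g]-Q$, where $Q$ is a polynomial. The multiplicity bound $(w+1)/(w-1)$ must be used sharply there, and I would attempt this first by adapting the Fang--Chang / Xia--Xu argument, with $w$ in place of $k+1$.
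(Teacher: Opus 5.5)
There is a genuine gap at the one step where the multiplicity hypothesis is actually used. In Step 1, and again in Case (a), you only announce that ``a Nevanlinna-theoretic estimate of Hayman/Milloux type'' shows $g$ is rational. Your closing paragraph concedes that you have not carried this out, and you aim it at $T(r,g)$, which is the hard and unnecessary target.

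The missing idea, which is exactly the paper's argument, is to apply the second main theorem to $F=M[g]$ itself, with the values $0,\infty,c_0$, where $c_0=b(z_0)/a_i(z_0)$. Three facts feed in:
\begin{enumerate}
\item $F\neq 0$.
\item A pole of $g$ of order $p$ is a pole of $F$ of order $\sum_s n_s(p+s)=pd+w-d\ge w$, so $\overline{N}(r,F)\le\frac1w N(r,F)$.
\item $\overline{N}\bigl(r,\frac{1}{F-c_0}\bigr)\le\frac{w-1}{w+1}N\bigl(r,\frac{1}{F-c_0}\bigr)$.
\end{enumerate}
Hence $T(r,F)\le\frac{w^2+1}{w^2+w}T(r,F)+S(r,F)$. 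This is impossible for transcendental $F$, since $\frac{w^2+1}{w^2+w}<1$ for $w\ge2$; the rational case is Lemma \ref{3l2} with $l=0$. No Milloux-type inequality for $g$ is needed. In your Case (a) the same computation works with $c_0$ replaced by the polynomial $b_1(0)(\zeta+c)^m/a_i(0)$. When $M[g]$ is transcendental you use the second main theorem for small functions; when $g$ is rational you use Lemma \ref{3l2} with $l=m$, after translating.

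Your scaling exponent is also wrong. With $\alpha=w/d$ one gets $M_l[f_j](z_j+\rho_j\zeta)=\rho_j^{\,w-w_l+d}M_l[g_j](\zeta)$. So even the top-weight term tends to $0$, the limit of $P[f_j]-b$ is the constant $-b(z_0)$, and Hurwitz gives nothing. The top monomial carries $w-d$ derivatives in total, so the correct exponent is $(w-d)/d$, as in the paper. In Step 2 and in the definition of $G_j$ it should be $(w-d+m)/d$.

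Your split at zeros of $b$ is a real improvement on the paper's argument. The paper's second-main-theorem step uses the values $0,\infty,b(z_0)/a_1(z_0)$, which collapse when $b(z_0)=0$, so its proof silently covers only points where $b\neq0$. Your Case (b) is sound in outline but has three loose ends:
\begin{itemize}
\item You assert, without argument, that the Zalcman sequence is incompatible with normality of $\{G_j\}$ at $0$. Put $\sigma_j=\rho_j/z_j\to0$; then $|g_j(\zeta)|=|\sigma_j|^{-\alpha}|G_j(\sigma_j\zeta)|$. So a limit $G$ of $G_j$ near $0$ forces $g_j\to\infty$ unless $G\equiv0$. If $G\equiv0$, Cauchy estimates give $g_j^{(s)}\to0$ for integers $s>\alpha$. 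Then $g$ is a zero-free polynomial, hence constant, a contradiction.
\item Applying Step 1 to $\{G_j\}$ needs the observation that Step 1 tolerates $j$-dependent coefficients converging locally uniformly.
\item Your final remark about the maximum principle is superfluous: Cases (a) and (b) already contradict non-normality at $0$.
\end{itemize}
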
 
    \begin{proof}
         Since normality is a local property, for generality, we consider $D=\{z: |z|<1\}$. Suppose $\mathscr{F}$ is not normal at $z_0$. So by Lemma \ref{3l1}, there exist a sequence $\{f_j\}$ of functions in $\mathscr{F}$, a sequence $\{z_j\}$ in $\mathbb{C}$ such that $z_j\longrightarrow z_0$ and a sequence $\{\rho_j\}$ of positive numbers such that $\rho_j\longrightarrow 0$ as $j\longrightarrow \infty$ and 
         \begin{equation}\label{3p5}
             g_j(\zeta)=\frac{f_j(z_j+\rho_j\zeta)}{\rho_j^{\frac{w-d}{d}}}\longrightarrow g(\zeta)
         \end{equation}
         spherically uniformly on a compact subset of $\mathbb{C}$, where $g(\zeta)$ is a non-constant meromorphic function on $\mathbb{C}$. Now Hurwitz's theorem gives $g(\zeta)\neq 0$.
         Then from \eqref{3p5}, we have
         \begin{equation*}
              g_j^{(i)}(\zeta)=\frac{\rho_j^{i}f_j^{(i)}(z_j+\rho_j\zeta)}{\rho_j^{\frac{w-d}{d}}};\quad i=1,2,\cdots,k.
         \end{equation*}
         Therefore
         \begin{equation*}
             M_q[g_j](\zeta)=\frac{\rho_j^{w_q-d}M_q[f_j](z_j+\rho_j\zeta)}{\rho_j^{w-d}};\quad q=1,2,\cdots,n.
         \end{equation*}
         Without loss of generality, we assume that $w$ occurs in $M_1$. Then $a_1$ is a zero-free holomorphic function.
         So, 
         \begin{equation}\label{3p6}
             P[f_j](z_j+\rho_j\zeta)=\sum_{q=1}^n\rho_j^{w-w_q}a_q(z_j+\rho_j\zeta)M_q[g_j](\zeta)\longrightarrow a_1(z_0)M_1[g](\zeta),
         \end{equation}
         which implies
         \begin{align}\label{3p7}
             P[f_j](z_j+\rho_j\zeta)-b(z_j+\rho_j\zeta) = & a_1(z_j+\rho_j\zeta)M_1[g_j](\zeta)+\sum_{q=2}^n\rho_j^{w-w_q}a_q(z_j+\rho_j\zeta)M_q[g_j](\zeta)\nonumber\\
             &-b(z_j+\rho_j\zeta)\nonumber\\
             &\longrightarrow a_1(z_0)M_1[g](\zeta)-b(z_0) \quad \text{as}\quad j\longrightarrow\infty.
         \end{align}
         As $P[f_j](z_j+\rho_j\zeta)\neq 0$, by Hurwitz's theorem, we may say either $M_1[g](\zeta)\equiv 0$ or $M_1[g](\zeta)\not\equiv 0$ for any $\zeta\in \mathbb{C}$ that is not a pole of $g(\zeta)$. If $M_1[g](\zeta)\equiv 0$, reduce that $g$ is a non-zero constant since $g\neq 0$. Thus $M_1[g](\zeta)\neq 0$.

         From \eqref{3p7}, by Hurwitz's theorem, we have that all zeros of $a_1(z_0)M_1[g](\zeta)-b(z_0)$ have multiplicities at least $\displaystyle{\frac{w+1}{w-1}}$.

         If $M_1[g]$ is a rational function, then $g$ is a rational function. Since $g\neq 0$ and $M_1[g]\neq0$, then by Lemma \ref{3l2} $a_1(z_0)M_1[g]-b(z_0)$ has at least one simple zero, which contradicts the fact that all zeros of $a_1(z_0)M_1[g]-b(z_0)$ have multiplicites at least $\displaystyle{\frac{w+1}{w-1}}\; (>1)$. Thus, $M_1[g]$ is transcendental, so by Nevanlinna's 2nd fundamental theorem, we have
         \begin{equation*}
             \begin{split}
                 &T(r,M_1[g])\\
                 \le & \overline{N}\left(r,a_1(z_0)M_1[g]\right)+\overline{N}\left(r,\frac{1}{a_1(z_0)M_1[g]}\right)+\overline{N}\left(r,\frac{1}{a_1(z_0)M_1[g]-b(z_0)}\right)+S\left(r, M_1[g]\right)\\
                 \le & \frac{1}{w}N\left(r,a_1(z_0)M_1[g]\right)+\frac{w-1}{w+1}N\left(r,a_1(z_0)M_1[g]\right)+S\left(r, M_1[g]\right)\\
                 \le &\frac{w^2+1}{w^2+w}T\left(r,M_1[g]\right)+ S\left(r, M_1[g]\right),
             \end{split}
         \end{equation*}
         which implies that $T\left(r,aM_1[g]\right)=S\left(r,M_1[g]\right)$, a contradiction. This completes the proof.
    \end{proof}
        \section{\textbf{Proof the theorem \ref{m}}}
    \begin{proof}
        Without loss of generality, we consider $w$ occurs in $M_1$ and hence $a_1$ is zero-free.

        It is known that normality is a local property, then for our convenience, assume that $D=\Delta= \{z:\; |z|<1\}$, and $\psi(z)=z^l\varphi(z)\; (z\in \Delta)$, where $l$ is a positive integer, $\varphi(0)=1,\; \varphi(z)\neq 0$ on $\Delta'=\{z:\; 0<|z|<1\}$. By the Theorem \ref{c}, $\mathscr{F}$ is normal in $\Delta'=\{z:\; 0<|z|<1\}$, so it is only required to establish that $\mathscr{F}$ is normal at $z=0$.

        Now, we consider $\displaystyle{\mathscr{G}=\left\{ g(z)=\frac{f(z)}{\psi(z)}:\; f\in\mathscr{F}, z\in \Delta \right\}}$, where for all $g\in\mathscr{G},\quad g(0)=\infty$, as $f\neq0, \quad f\in \mathscr{F} $. It is required to prove that $\mathscr{G}$ is normal in $\Delta$. 
        If possible, suppose that $\mathscr{G}$ is not normal at $z_0\in \delta$. Then, by Lemma \ref{3l1}, there exist a sequence of functions $\{g_j\}$ in $\mathscr{G}$, a sequence $\{z_j\}$ in $\mathbb{C}$ such that $z_j\longrightarrow z_0$ and a sequence $\{\rho_j\}$ of positive numbers such that $\rho_j\longrightarrow 0$ as $j\longrightarrow \infty$ and 
         \begin{equation*}
             G_j(\zeta)=\frac{g_j(z_j+\rho_j\zeta)}{\rho_j^{\frac{w-d}{d}}}\longrightarrow G(\zeta) \quad \left(\text{as} \quad j\longrightarrow \infty\right),
         \end{equation*}
        spherically uniformly on a compact subset of $\mathbb{C}$, where $G(\zeta)$ is a non-constant meromorphic function on $\mathbb{C}$ and $G(\zeta)\neq 0$.
         We now consider the following two cases.
         
         \noindent\textbf{Case I:} Suppose $\displaystyle{\frac{z_j}{\rho_j}\longrightarrow \infty \quad \text{as}\quad j\longrightarrow\infty}$.
         
         Then, using Leibniz's rule of derivative on $f_j(z)=\psi(z)g_j(z)$, we have
         \begin{align*}
             & f_j^{(i)}(z) \nonumber\\ 
             = & {i\choose 0}g_j^{(i)}(z)\psi(z)+{i\choose 1}g_j^{(i-1)}(z)\psi'(z)+\cdots +{i\choose p}g_j^{(i-p)}(z)\psi^{(p)}(z)+\cdots+{i\choose i}g_j(z)\psi^{(i)}(z)\\
             =& g_j^{(i)}(z)\psi(z)+\sum_{p=1}^{i}g_j^{(i-p)}(z)\psi^{(p)}(z),
         \end{align*}
         which implies that
         \begin{equation}\label{3p8}
             g_j^{(i)}(z)=\frac{f_j^{(i)}(z)}{\psi(z)}-\sum_{p=1}^{i}{i\choose p}g_j^{(i-p)}(z)\frac{\psi^{(p)}(z)}{\psi(z)}.
         \end{equation}
         Again
         \begin{align}
             \psi^{(p)}(z)= &\sum_{t=0}^pl(l-1)(l-2)\cdots(l-p+t+1){p\choose t}z^{l-p+t}\varphi^{(t)(z)}\nonumber\\ 
             =& \sum_{t=0}^pA_{pt}z^{l-p+t}\varphi^{(t)}(z),
         \end{align}
         where 
         \begin{align*}
             A_{pt}=& l(l-1)(l-2)\cdots(l-p+t+1){p\choose t},\quad \text{if}\quad l\ge p\\
             =& 0; \quad \text{if}\quad l<p \quad\text{and}\quad t=0,1,\cdots,p-l-1\\
            = & 1; \quad \text{if}\quad t=p.
         \end{align*}
         Thus from \eqref{3p8} we have 
         \begin{equation}\label{3p9}
             g_j^{(i)}(z)=\frac{f_j^{(i)}(z)}{\psi(z)}-\sum_{p=1}^{i}{i\choose p}g_j^{(i-p)}(z)\sum_{t-0}^{p}A_{pt}\frac{1}{z^{p-t}}\frac{\varphi^{(t)}(z)}{\varphi(z)},\quad i=0,2,\cdots,k.
         \end{equation}
         Since
         \begin{equation*}
             G_j^{(i)}=\frac{\rho_j^{i}g_j^{(i)}(z_j+\rho_j\zeta)}{\rho_j^\frac{w-d}{d}};\quad i=0,1,2,\cdots,k,
         \end{equation*}
         we have 
         \begin{equation*}
            \left(\rho_j^\frac{w-d}{d}\right)^{n_{qi}} \left(G_j^{(i)}\right)^{n_{qi}}=\left(\rho_j^{i}\right)^{n_{qi}}\left({g_j^{(i)}(z_j+\rho_j\zeta)}\right)^{n_{qi}};\quad i=0,1,2,\cdots,k;\quad q=1,2,\cdots,n.
         \end{equation*}
         Therefore 
         \begin{equation}\label{3p10}
             \rho_j^{w-w_q}M_q[G_j](\zeta)=M_q[g_j](z_j+\rho_j\zeta).
         \end{equation}
         Now, from \eqref{3p9}, we get
         \begin{align}\label{3p11}
             & M_q[g_j](z_j+\rho_j\zeta)\nonumber\\
             =&\prod_{i=0}^{k}\left[\frac{f_j^{(i)}(z_j+\rho_j\zeta)}{\psi(z_j+\rho_j\zeta)}-\sum_{p=1}^{i}\left\{g_j^{(i-p)}{(z_j+\rho_j\zeta)\sum_{t=0}^{p}A_{pt}\frac{1}{(z_j+\rho_j\zeta)^{p-t}}\frac{\varphi^{(t)}(z_j+\rho_j\zeta)}{\varphi(z_j+\rho_j\zeta)}}\right\}\right]^{n_{qi}}.
         \end{align}
         Now, from \eqref{3p10} and \eqref{3p11} we get
         \begin{align}\label{3p12}
             & \rho_j^{w-w_q}M_q[G_j](\zeta)\nonumber\\=& \prod_{i=0}^{k}\left[\frac{f_j^{(i)}(z_j+\rho_j\zeta)}{\psi(z_j+\rho_j\zeta)}-\sum_{p=1}^{i}\left\{g_j^{(i-p)}{(z_j+\rho_j\zeta)\sum_{t=0}^{p}A_{pt}\frac{1}{(z_j+\rho_j\zeta)^{p-t}}\frac{\varphi^{(t)}(z_j+\rho_j\zeta)}{\varphi(z_j+\rho_j\zeta)}}\right\}\right]^{n_{qi}}\nonumber\\
             =&  \prod_{i=0}^{k}\left[\frac{f_j^{(i)}(z_j+\rho_j\zeta)}{\psi(z_j+\rho_j\zeta)}-\sum_{p=1}^{i}\left\{\frac{g_j^{(i-p)}(z_j+\rho_j\zeta)}{\rho_j^p}\sum_{t=0}^{p}\frac{A_{pt}}{\left(\frac{z_j}{\rho_j}+\zeta\right)^{p-t}}\frac{\rho_j^{t}\varphi^{(t)}(z_j+\rho_j\zeta)}{\varphi(z_j+\rho_j\zeta)}\right\}\right]^{n_{qi}}.
         \end{align}
         It is clear that 
         \begin{align*}
             \lim_{j\longrightarrow\infty}\frac{1}{\frac{z_j}{\rho_j}+\zeta}=0\quad\text{and}\quad \lim_{j\longrightarrow\infty}\frac{\rho_j^t\varphi^t(z_j+\rho_j\zeta)}{\varphi(z_j+\rho_j\zeta)}=0, \quad \text{for}\quad t\ge 1.
         \end{align*}
         Here $\displaystyle{\frac{g_j^{(i-p)}(z_j+\rho_j\zeta)}{\rho_j^p}}$ is locally bounded on $\mathbb{C}\backslash$ $\{\text{all poles of}\; G(\zeta)\}$, as  $\displaystyle{\frac{g_j(z_j+\rho_j\zeta)}{\rho_j^{\frac{w-d}{d}}}\longrightarrow G(\zeta)}$. 
         Now, \eqref{3p11}  can be written as 
         
         \begin{align}\label{3p13}
             M_q[g_j](z_j+\rho_j\zeta)=& \frac{M_q[f_j](z_j+\rho_j\zeta)}{\psi^{d}(z_j+\rho_j\zeta)}+R(g_j(z_j+\rho_j\zeta)),\psi_j(z_j+\rho_j\zeta),z_j+\rho_j\zeta)\nonumber\\
             \text{or,}\quad \frac{M_q[f_j](z_j+\rho_j\zeta)}{\psi^{d}(z_j+\rho_j\zeta)}=& M_q[g_j](z_j+\rho_j\zeta)-R(g_j(z_j+\rho_j\zeta)),\psi_j(z_j+\rho_j\zeta),z_j+\rho_j\zeta).
         \end{align}
         Thus, for $q=1$, together \eqref{3p9}, \eqref{3p10}, \eqref{3p12} and \eqref{3p13}  on every compact subset of $\mathbb{C}$, which contains no pole of $G(\zeta)$, we have
         \begin{align}\label{3p14}
             \frac{M_1[f_j](z_j+\rho_j\zeta)}{\psi^{d}(z_j+\rho_j\zeta)}\longrightarrow M_1[g](\zeta) \quad \text{as}\quad j\longrightarrow\infty.
         \end{align}
         If $q\neq1$, then 
         \begin{align*}
             \frac{M_q[f_j](z_j+\rho_j\zeta)}{\psi^{d}(z_j+\rho_j\zeta)}\longrightarrow 0\quad \text{as}\quad j\longrightarrow\infty.
         \end{align*}
         So,
         \begin{align}\label{3p15}
         \frac{\displaystyle{\sum_{q=1}^{n}a_q(z_j+\rho_j\zeta)M_q[f_j](z_j+\rho_j\zeta)}}{\psi^{d}(z_j+\rho_j\zeta)}\longrightarrow a_1(z_0)M_1[G](\zeta)\quad {as}\quad j\longrightarrow \infty
         \end{align}
         and
         \begin{align*}
             \frac{\displaystyle{\sum_{q=1}^{n}a_q(z_j+\rho_j\zeta)M_q[f_j](z_j+\rho_j\zeta)}-\psi^{d}(z_j+\rho_j\zeta)}{\psi^{d}(z_j+\rho_j\zeta)}\longrightarrow a_1(z_0)M_1[G](\zeta)-1\quad \text{as}\quad j\longrightarrow\infty.
         \end{align*}
         Since $\displaystyle{\sum_{q=1}^{n}a_q(z_j+\rho_j\zeta)M_q[f_j](z_j+\rho_j\zeta)}\neq 0$, then we may say that $$ \displaystyle{\sum_{q=1}^{n}a_q(z_j+\rho_j\zeta)M_q[f_j](z_j+\rho_j\zeta)}-\psi^{d}(z_j+\rho_j\zeta)\quad  \text{and} \quad\psi(z_j+\rho_j\zeta)$$ have no common zeros. Next, we can arrive at a contradiction by using the same argument as in the latter part of Lemma \ref{3l3}.

         \noindent\textbf{Case II:} Suppose $\frac{z_j}{\rho_j}\longrightarrow \alpha \quad \text{a finite number}$ as $j\longrightarrow\infty$. 
         
         Then
         \begin{align*}
             \frac{g_j(\rho_j\zeta)}{\rho_j^{\frac{w-d}{d}}}= \frac{g_j(z_j+\rho_j(\zeta-\frac{z_j}{\rho_j}))}{\rho_j^{\frac{w-d}{d}}}=G_j(\zeta-\frac{z_j}{\rho_j})\longrightarrow G(\zeta-\alpha)=G_1(\zeta)\quad \text{(say)}. 
         \end{align*}
         Clearly $G_1(\zeta)\neq 0$ and $\zeta=0$ is a pole of $G_1$ with order at least $l$.
         Now, consider
         \begin{align}\label{3p16}
             H_j(\zeta)=\frac{f_j(\rho_j\zeta)}{\rho_j^{\frac{w-d}{d}+l}}.
         \end{align}
         So, \begin{align}\label{3p17}
             H_j(\zeta)=\left(\frac{\psi(\rho_j\zeta)}{\rho_j^{l}}\right)\left(\frac{f_j(\rho_j\zeta)}{\rho_j^{\frac{w-d}{d}}\psi(\rho_j\zeta)}\right)=\left(\frac{\psi(\rho_j\zeta)}{\rho_j^{l}}\right)\left(\frac{g_j(\rho_j\zeta)}{\rho_j^{\frac{w-d}{d}}}\right).
         \end{align}
         It is clear that $\frac{\psi(\rho_j\zeta)}{\rho_j^{l}}=\zeta^l\varphi(\rho_j\zeta)\longrightarrow \zeta^l$ as $j\longrightarrow \infty$. Thus
         \begin{align*}
             H_j(\zeta)\longrightarrow \zeta^lG(\zeta-\alpha)=\zeta^lG_1(\zeta) \quad \left(\text{as}\quad j\longrightarrow \infty\right),
         \end{align*}
         uniformly on compact subset of $\mathbb{C}$. Since $G_1$ has a pole of order at least $l$ at $\zeta=0$, we have $H(0)\neq0\implies H(\zeta)\neq 0$.
         Now, from \eqref{3p16} we get
         \begin{align*}
             H_j^{(i)}(\zeta)=\frac{f_j^{(i)}(\rho_j\zeta)}{\rho_j^{\frac{w-d}{d}+l-i}}\longrightarrow H^{(i)}(\zeta) \quad \left(\text{as}\quad j\longrightarrow\infty\right)
         \end{align*}
         spherically uniformly on compact subset of $\mathbb{C}\backslash \{\text{all the poles of}\; G_1(\zeta)\}$. So on every compact subsets of $\mathbb{C}$ which contains no pole of $G(\zeta)$, we have 
         \begin{align*}
             \rho_j^{w-d}M_q[H_j](\zeta)=\rho_j^{w_q-d}\frac{M_q[f_j](\rho_j\zeta)}{\rho_j^{ld}}
         \end{align*}
         and when $q=1$, then $w_1=w$. So 
         \begin{align*}
             \frac{M_1[f_j](\rho_j\zeta)}{\rho_j^{ld}}\longrightarrow M_1[H](\zeta) \quad \left(\text{as}\quad j\longrightarrow \infty\right)
         \end{align*}
         spherically uniformly on a compact subset of $\mathbb{C}\backslash$ $\{\text{all poles of}\; G_1(\zeta)\}$, and 
         \begin{align*}
              \rho_j^{w-w_q}M_q[H_j](\zeta)=\frac{M_q[f_j](\rho_j\zeta)}{\rho_j^{ld}}\longrightarrow 0 \quad \text{as} \quad j\longrightarrow \infty.
         \end{align*}
         So, on every compact subset of $\mathbb{C}$, which contains no poles of $G(\zeta)$, we have
    \begin{align}\label{3p18}
    \frac{\displaystyle{\sum_{q=1}^{n}a_q(\rho_j\zeta)M_q[f_j](\rho_j\zeta)}}{\rho_j^{ld}}\longrightarrow a_1(z_0)M_1[H](\zeta)\quad \left(\text{as}\quad j\longrightarrow \infty\right)
    \end{align}
    and 
    \begin{align}\label{3p19}
    \frac{\displaystyle{\sum_{q=1}^{n}a_q(\rho_j\zeta)M_q[f_j](\rho_j\zeta)}-\psi^{d}(\rho_j\zeta)}{\rho_j^{ld}}\longrightarrow a_1(z_0)M_1[H](\zeta)-\zeta^{ld}\quad \left(\text{as}\quad j\longrightarrow \infty\right)
    \end{align}
    locally uniformly on $\mathbb{C}$. By the assumption of the theorem and \eqref{3p19}, Hurwitz theorem gives that, all the zeros of $a_1(z_0)M_1[H](\zeta)-\zeta^{ld}$ have multiplicities at least $\displaystyle{\frac{w+1}{w-1}}$. 
    
    Similarly, we see from \eqref{3p17}, that either $\displaystyle{a_1(z_0)M_1[H](\zeta)\neq 0}$ or, $a_1(z_0)M_1[H](\zeta)\equiv 0$.

    If $M_1[H](\zeta)\equiv 0$, then  $\displaystyle{a_1(z_0)M_1[H](\zeta)-\zeta^{ld}=-\zeta^{ld}}$ has only one zero with multiplicity $ld$.
    
    Now, let $w=2$. By the hypothesis of the Theorem, we have  $ld\le 2$. Hence, the function $\displaystyle{a_1(z_0)M_1[H](\zeta)-\zeta^{ld}}$ has only one zero with multiplicity $\le2$, which contradicts the fact that all zeros of this function have multiplicities $\displaystyle{\frac{w+1}{w-1}=3}$.

    If $w\ge 3$, then $ld=1$. Then by a similar argument we can arrive at a contradiction, since $\displaystyle{\frac{w+1}{w-1}>1}$. Hence $M_1[H](\zeta)\neq 0$. 

    Assume that, $M_1[H](\zeta)$ is rational. Noting that, $M_1[H](\zeta)\neq 0$ and by Lemma \ref{3l3}, $a_1(z_0)M_1[H](\zeta)-\zeta^{ld}$ has at least one simple zero, which leads to a contradiction, since all zeros of $a_1(z_0)M_1[H](\zeta)-\zeta^{ld}$ have multiplicities at least $\displaystyle{\frac{w+1}{w-1}}$. So, $M_1[H](\zeta)$ is a transcendental function.

    So by the First and Second fundamental Theorems of Nevanlinna, we have
    \begin{align*}
        T\left(r,a_1(z_0)M_1[H]-\zeta^{ld}\right)=& T\left(r,M_1[H]\right)+S(r, M_1[H])
    \end{align*}
    \begin{align}\label{3p20}
        \& \quad T\left(r,M_1[H]\right)\le  & \overline{N}\left(r,\frac{1}{M_1[H]}\right)+\overline{N}\left(r,\frac{1}{a_1(z_0)M_1[H]-\zeta^{ld}}\right)+\overline{N}\left(r,{M_1[H]}\right)\nonumber\\
        &+S(r,M_1[H]).
    \end{align}
   
    Since $\displaystyle{\overline{N}(r,M_1[H])\le   \frac{1}{w}N(r,M_1[H])}$, then from \eqref{3p20} we get
    \begin{align*}
        T(r,M_1[H])\le & \frac{1}{w}N(r,M_1[H])+\frac{w-1}{w+1}N\left(r,\frac{1}{a_1(z_0)M_1[H]-\zeta^{ld}}\right)+S(r,M_1[H])\\
        \le & \frac{1}{w}N(r,M_1[H])+\frac{w-1}{w+1}N(r,M_1[H])+S(r,M_1[H])\\
        \le & \frac{w^2+1}{w^2+w}N(r,M_1[H])+S(r,M_1[H])\\
        \le & \frac{w^2+1}{w^2+w}T(r,M_1[H])+S(r,M_1[H]).
    \end{align*}
    Since $\displaystyle{\frac{w^2+1}{w^2+w}<1}$, then $T(r,M_1[H])=S(r,M_1[H])$, which leads to a contradiction. So, $\mathscr{G}$ is normal in $\Delta$. 

    It remains to show that the family $\mathscr{F}$ is normal at $z = 0$. Since $\mathscr{G}$ is normal in $\Delta$, it is equicontinuous with respect to the spherical distance. Moreover, as  $g(0) = \infty$ for each $g \in \mathscr{G}$, there exists $\delta > 0$ such that $|g(z)| > 1$ for every $g \in \mathscr{G}$ and each $z \in \Delta_\delta = \{ z: |z| < \delta \}$. 

    Suppose, to the contrary, that the family $\mathscr{F}$ is not normal at $z=0$. Now, consider the family 
    \begin{align*}
        \mathscr{F}_1 = \left\{ \frac{1}{f}: f \in \mathscr{F} \right\}.
    \end{align*}
    Then, it is clear that $ \mathscr{F}_1$ is normal in $\Delta' = \{ z: 0 < |z| < 1 \}$, but not at $z = 0$. Hence there exists a sequence $\left\{ \frac{1}{f_j} \right\} \subset \mathscr{F}_1$ that converges locally uniformly in $\Delta'$ but fails to converge in $\Delta$. 

    Since $f_j \neq 0$ in $\Delta$, each $\frac{1}{f_j}$ is holomorphic in $\Delta$. According to  the maximum modulus principle we have $\frac{1}{f_j} \to \infty$ in $\Delta'$, which implies that $f_j \to 0$ locally and uniformly in $\Delta'$; so does $\{ g_j \} \subset \mathscr{G}$, where $g_j = \frac{f_j}{\varphi}$. However, this contradict the fact that $|g_j(z)| \ge 1$ for each $z \in \Delta_\delta$. 

This concludes the proof of Theorem \ref{m}.
    \end{proof}
	\bibliographystyle{acm}
	\bibliography{mybib.bib}
\end{document}